\DeclareMathOperator{\ad}{\mathsf{ad}}
\DeclareMathOperator{\Der}{Der}
\DeclareMathOperator{\Hom}{Hom}
\DeclareMathOperator{\id}{id}
\DeclareMathOperator{\sym}{S}
\DeclareMathOperator{\Tr}{Tr}
\newcommand{\vertbar}{\>|\>}
\newcommand{\set}[2]{\ensuremath{\{ #1 \vertbar #2 \}}}
\def\liebrack  {\ensuremath{[\,\cdot\, , \cdot\,]}}
\def\topbar{\ensuremath{\overline{\phantom a}}\,}
\newtheorem{theorem}{Theorem}
\newtheorem*{proposition}{Proposition}
\newtheorem*{corollary}{Corollary}
\newtheorem{lemma}{Lemma}
\begin{document}

\title{On Hermitian and skew-Hermitian matrix algebras over octonions}

\author{Arezoo Zohrabi}
\address{University of Ostrava, Ostrava, Czech Republic}
\email{azohrabi230@gmail.com}

\author{Pasha Zusmanovich}
\email{pasha.zusmanovich@osu.cz}

\date{December 6, 2019}

\begin{abstract}
We prove simplicity, and compute $\delta$-derivations and symmetric associative
forms of algebras in the title.
\end{abstract}

\maketitle

\section*{Introduction}

We consider algebras of Hermitian and skew-Hermitian matrices over octonions.
While such algebras of matrices of low order are well researched and well 
understood (the algebra of $3 \times 3$ Hermitian matrices being the famous
exceptional simple Jordan algebra), this is not so for higher orders; the case
of Hermitian matrices of order $4 \times 4$ appears in modern physical theories.

Derivation algebras of these algebras were recently computed in \cite{petyt},
and here we continue to study these algebras. After the preliminary 
\S \ref{sec-1}, where we set notation and remind basic facts about algebras with
involution, we prove simplicity of the algebras in question (\S \ref{sec-simp}),
and compute their $\delta$-derivations (\S \ref{sec-der}) and symmetric associative forms (\S \ref{sec-form}). The last \S \ref{sec-q} 
contains some further questions.

\section{Notation, conventions, preliminary remarks}\label{sec-1}

\subsection{}

The ground field $K$ is assumed to be arbitrary, of characteristic $\ne 2,3$. 
``Algebra'' means an arbitrary algebra over $K$, not necessary associative, or 
Lie, or Jordan, or satisfying any other distinguished identity, unless specified
otherwise. If $a$ is an element of an algebra $A$, $R_a$ denotes the linear 
operator of the right multiplication on $a$. All unadorned tensor products and
$\Hom$'s are over the ground field $K$. 

\subsection{Algebras with involution}

An \emph{involution} on a vector space $V$ is a linear map $j: V \to V$ such 
that $j^2 = \id_V$. If $j$ is involution on $V$, define
\begin{align*}
&\sym^{+}(V,j) = \set{x \in V}{j(x) = x} \\
\intertext{and}
&\sym^{-}(V,j) = \set{x \in V}{j(x) = -x} ,
\end{align*}
the subspaces of $j$-symmetric and $j$-skew-symmetric elements of $V$, 
respectively.

For an arbitrary vector space with an involution $j$, we have the direct sum 
decomposition:
$$
V = \sym^+(V,j) \dotplus \sym^-(V,j) .
$$

An \emph{involution} on an algebra $A$ is a linear map $j: A \to A$ which is an
involution of $A$ as a vector space, and, additionally, is an antiautomorphism
of $A$, i.e. $j(xy) = j(y)j(x)$ for any $x,y \in A$.

For an arbitrary algebra $A$ with an involution $j$, the subspace $\sym^+(A,j)$
is closed with respect to anticommutator $x \circ y = \frac 12 (xy + yx)$ and 
thus forms a (commutative) algebra with respect to $\circ$. The anticommutator 
will be also frequently referred as \emph{Jordan product}, despite that the 
ensuing algebras are, generally, not Jordan. Similarly, the subspace $\sym^-(A,j)$ is closed with respect to commutator $[x,y] = xy - yx$, 
and thus forms an (anticommutative) algebra with respect to $\liebrack$. 

We have the following obvious inclusions:
\begin{alignat}{1}\label{eq-inc}
&\sym^+(A,j) \circ \sym^+(A,j) \subseteq \sym^+(A,j) \notag \\
&\sym^+(A,j) \circ \sym^-(A,j) \subseteq \sym^-(A,j) \\
&\sym^-(A,j) \circ \sym^-(A,j) \subseteq \sym^+(A,j) \notag
\end{alignat}
and
\begin{alignat}{1}\label{eq-incbrack}
&[\sym^+(A,j), \sym^+(A,j)] \subseteq \sym^-(A,j) \notag \\
&[\sym^+(A,j), \sym^-(A,j)] \subseteq \sym^+(A,j) \\
&[\sym^-(A,j), \sym^-(A,j)] \subseteq \sym^-(A,j) . \notag
\end{alignat}

If $(A,j)$ and $(B,k)$ are two vector spaces, respectively algebras, with 
involution, then their tensor product $(A \otimes B, j\otimes k)$, is a vector
space, respectively algebra, with involution. Here $j\otimes k$ acts on 
$A \otimes B$ in an obvious way: 
$(j \otimes k) (a \otimes b) = j(a) \otimes k(b)$ for $a \in A$, $b \in B$.

\subsection{Matrix algebras}

$M_n(K)$ denotes the (associative) algebra of $n \times n$ matrices with entries
in $K$. The matrix transposition, denoted by ${}^\top$, is an involution on 
$M_n(K)$. We use the shorthand notation $M_n^+(K) = \sym^+(M_n(K),{}^\top)$ and 
$M_n^-(K) = \sym^-(M_n(K),{}^\top)$ for the spaces of symmetric and 
skew-symmetric $n \times n$ matrices, respectively. The algebra $M_n^+(K)$ with
respect to the Jordan product is a simple Jordan algebra, and the algebra 
$M_n^-(K)$ with respect to the commutator is the (semi)simple orthogonal Lie 
algebra, customarily denoted by $\mathfrak{so}_n(K)$\footnote{
$\mathfrak{so}_n(K)$ is isomorphic to $\mathfrak{sl}_2(K)$ for $n=3$, to
$\mathfrak{sl}_2(K) \oplus \mathfrak{sl}_2(K)$ for $n=4$, 
to $\mathfrak{sl}_3(K)$ for $n=6$, and is a simple Lie algebra of type $B_k$ for
$n=2k+1$, $k \ge 2$, or of type $D_k$ for $n = 2k$, $k \ge 4$, but these details
are immaterial for our considerations here.
}. 
$\Tr(X)$ denotes the trace of a matrix $X$, and $E$ denotes the identity matrix.

\begin{lemma}\label{lemma-1}
If $x \in M_n^-(K)$ is such that $x \circ M_n^-(K) = 0$, then $x=0$.
\end{lemma}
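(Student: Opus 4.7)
The plan is to reduce the vanishing condition to a simple linear system by testing against a basis of $M_n^-(K)$. Since $\mathrm{char}\, K \ne 2$, the hypothesis $x \circ M_n^-(K) = 0$ is equivalent to $xy + yx = 0$ for every $y \in M_n^-(K)$. A natural basis of $M_n^-(K)$ consists of the matrices $y_{kl} = e_{kl} - e_{lk}$ with $1 \le k < l \le n$, where $e_{ij}$ denotes the standard matrix unit, so it suffices to write out the relation $xy_{kl} + y_{kl}x = 0$ for each such pair $k, l$.

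The first concrete step is to expand the $(i,j)$-entry of $xy_{kl} + y_{kl}x$ using $e_{pq}e_{rs} = \delta_{qr}e_{ps}$. A routine calculation gives
\begin{equation*}
(xy_{kl} + y_{kl}x)_{ij} = x_{ik}\delta_{jl} - x_{il}\delta_{jk} + \delta_{ik}x_{lj} - \delta_{il}x_{kj}.
\end{equation*}
The second step is to extract the vanishing of each entry $x_{ab}$ by judicious index choices. For $n \ge 3$, given any pair $(i,k)$ with $i \ne k$, one picks $l$ outside $\{i, k\}$ and sets $j = l$; the three Kronecker-$\delta$ terms collapse, leaving $x_{ik} = 0$. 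Skew-symmetry together with $\mathrm{char}\, K \ne 2$ then forces $x_{ii} = 0$ as well, hence $x = 0$.

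I do not anticipate any serious obstacle; the lemma essentially says that the Jordan product on $M_n^-(K)$ has no left annihilator, which is hardly surprising. The only mild subtlety is the boundary case $n = 2$, where the prescription ``pick $l \notin \{i,k\}$'' is unavailable. This is handled by the very same formula with $(k,l) = (1,2)$ and $i = j = 1$, which collapses to $-2x_{12} = 0$; the case $n = 1$ is vacuous. Thus a single formula, applied with two slight variants of index choice, covers every $n$.
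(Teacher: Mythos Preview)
Your proof is correct and takes a genuinely different route from the paper. The paper argues as follows: from $xy+yx=0$ for all $y\in\mathfrak{so}_n(K)$ one takes traces to obtain $\Tr(xy)=0$ for all such $y$; since this is (proportional to) the Killing form on the semisimple Lie algebra $\mathfrak{so}_n(K)$, nondegeneracy forces $x=0$. Your argument avoids any appeal to Lie theory or invariant forms and instead extracts each entry $x_{ik}$ directly by a judicious choice of $y_{kl}$ and of the matrix position $(i,j)$. The paper's proof is shorter and more conceptual, but it implicitly leans on the structure theory of $\mathfrak{so}_n(K)$ (and one must be slightly careful: for $n=2$ the algebra is abelian, and in positive characteristic the Killing form itself---as opposed to the trace form $\Tr(xy)$---can degenerate, so ``Killing form'' should really be read as ``trace form'' there). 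Your approach is entirely elementary, self-contained, and works uniformly in any characteristic $\ne 2$; the price is a small case split at $n\le 2$, which you handle cleanly.
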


\begin{proof}
Considering this on the Lie algebra level, we have $xy + yx = 0$ for any 
$y \in \mathfrak{so}_n(K)$. Taking the trace of the both sides of this equality,
we have $\Tr(xy) = 0$. The left-hand side in the last equality is proportional 
to the Killing form, and since the Killing form on the (semi)simple Lie algebra
$\mathfrak{so}_n(K)$ is nondegenerate, we have $x=0$.
\end{proof}

\begin{lemma}\label{lemma-circ}
If $m \in M_n^+(K)$ is such that $[m,M_n^-(K)] = 0$ or $[m,M_n^+(K)] = 0$, then
$m$ is a multiple of $E$.
\end{lemma}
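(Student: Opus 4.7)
The plan is to make a direct coordinate computation in the matrix-unit basis $E_{ij}$ of $M_n(K)$. Write $m = (m_{ij})$ with $m_{ij} = m_{ji}$, and use the elementary formulas $m E_{ij} = \sum_k m_{ki} E_{kj}$ and $E_{ij} m = \sum_l m_{jl} E_{il}$ throughout.

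I would handle the case $[m, M_n^+(K)] = 0$ first, since it is the simpler one. Because $E_{ii} \in M_n^+(K)$, computing $[m, E_{ii}]$ entrywise immediately forces the off-diagonal entries in row $i$ and column $i$ of $m$ to vanish; letting $i$ range over $1, \dots, n$ makes $m$ diagonal. Testing next against $E_{ij} + E_{ji} \in M_n^+(K)$ for $i \ne j$, the $(i,j)$-entry of that commutator reduces to $m_{ii} - m_{jj}$, so all diagonal entries coincide and $m$ is a scalar multiple of $E$.

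For $[m, M_n^-(K)] = 0$, I would use the basis $F_{ij} = E_{ij} - E_{ji}$ (with $i < j$) of $M_n^-(K)$. Assuming $n \ge 3$, for any $l \notin \{i,j\}$ the $(i,l)$-entry of $[m, F_{ij}] = 0$ reads $-m_{jl}$, killing all off-diagonal entries of $m$ as $i, j, l$ vary; the $(i,j)$-entry of the same commutator then supplies $m_{ii} = m_{jj}$. The one piece that requires separate treatment is the degenerate case $n = 2$, where $M_2^-(K)$ is one-dimensional and no index $l \notin \{i,j\}$ is available; this is dispatched by a direct $2 \times 2$ calculation against the single generator of $M_2^-(K)$. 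I do not foresee a real obstacle: the argument is purely computational, and the standing hypothesis on the characteristic of $K$ is used only mildly, when dividing by $2$.
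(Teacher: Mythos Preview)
Your argument is correct. The matrix-unit computations you outline do exactly what you claim: for $[m,M_n^+(K)]=0$ the diagonal matrix units kill the off-diagonal entries and then $E_{ij}+E_{ji}$ equalizes the diagonal; for $[m,M_n^-(K)]=0$ with $n\ge 3$ the $(i,l)$-entry of $[m,F_{ij}]$ with $l\notin\{i,j\}$ gives $-m_{jl}=0$, and the $(i,j)$-entry then yields $m_{ii}=m_{jj}$; and the $2\times 2$ case is a direct check against the single generator of $M_2^-(K)$, using $\operatorname{char}K\ne 2$.

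The paper takes a different, structural route. For $[m,M_n^-(K)]=0$ it invokes the representation theory of $\mathfrak{so}_n(K)$: the action on $M_n^+(K)$ is the symmetric square of the natural module, which decomposes as the trivial $1$-dimensional module $KE$ plus an irreducible piece (or, for $n=4$, a tensor product of two irreducibles), so the only $\mathfrak{so}_n$-invariants are scalar multiples of $E$. For $[m,M_n^+(K)]=0$ the paper observes that commuting with every symmetric matrix forces all Jordan associators $(m,s,t)$, $(s,m,t)$, $(s,t,m)$ to vanish, placing $m$ in the center of the simple Jordan algebra $(M_n^+(K),\circ)$, which is $KE$. Your approach is more elementary and entirely self-contained, with no appeal to classification of irreducible $\mathfrak{so}_n$-modules or to the structure theory of simple Jordan algebras; the paper's approach, by contrast, is coordinate-free and situates the lemma within the broader Lie- and Jordan-theoretic framework that the later sections rely on.
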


\begin{proof}
\emph{Case of $[m,M_n^-(k)] = 0$}. 
Inspection of tables of irreducible representations of simple classical Lie 
algebras reveals that the representation of $\mathfrak{so}_n(K)$ in $M_n^+(K)$,
being isomorphic to the symmetric square of the natural representation, 
decomposes as the direct sum of the trivial $1$-dimensional representation 
(spanned by the identity matrix), and the $\frac{n^2 + n - 2}{2}$-dimensional 
irreducible representations in the case $n \ne 4$, and the tensor product of two
irreducible representations in the case $n=4$ (see, for example, 
\cite[Lemma 3.1]{bbm}). The statement of Lemma than readily follows.

\emph{Case of $[m,M_n^+(k)] = 0$}.
It is easy to check that this condition implies 
$$
(m,s,t) = (s,m,t) = (s,t,m) = 0
$$
for any $s,t \in M_n^+(K)$, where 
$(x,y,z) = (x \circ y) \circ z - x \circ (y \circ z)$ is the Jordan associator,
i.e. $m$ lies in the center of the simple Jordan algebra $(M_n^+(K), \circ)$, 
which coincides with $KE$.
\end{proof}

\subsection{The octonion algebra}

Note that we do not assume the ground field $K$ to be algebraically closed, but 
the split octonion algebra $\mathbb O$ is defined uniquely over any field. This
is the algebra with unit $1$. Let us note the properties of its standard basis
$\{1, e_1, \dots, e_7\}$ we will need in the sequel. We have $e_i^2 = -1$, 
$e_ie_j = -e_je_i$, and, denoting by $B_i$ the $6$-dimensional linear span of 
all the basic elements except of $1$ and $e_i$, we have 
$e_i B_i = B_i e_i = B_i$, for any $i=1,\dots,7$ (see, for example, 
\cite[\S 2, Table 1]{baez}). By 
$$
*: \{1,\dots,7\} \times \{1,\dots,7\} \to \{1,\dots,7\}
$$
we denote the partial binary operation such that 
$e_ie_j = -e_je_i = \pm e_{i*j}$, $i \ne j$.

The standard conjugation in $\mathbb O$, denoted by $\topbar$, and defined by
$\overline 1 = 1$, $\overline e_i = -e_i$, turns it to an algebra with 
involution. Thus, denoting $\mathbb O^+ = \sym^+(\mathbb O,\topbar)$ and 
$\mathbb O^- = \sym^-(\mathbb O,\topbar)$, we have $\mathbb O^+ = K1$, and 
$\mathbb O^-$ is the $7$-dimensional subspace of imaginary octonions, linearly
spanned by $e_1, \dots, e_7$. The latter subspace, with respect to the 
commutator, forms the $7$-dimensional simple Malcev algebra.

As for any $a \in \mathbb O$, the elements $a + \overline a$ and $a \overline a$
belong to $\mathbb O^+$, we get the linear map $T: \mathbb O \to K$ and the 
quadratic map $N: \mathbb O \to K$, defined by $T(a) = a + \overline a$ and 
$N(a) = a\overline a$, called the \emph{trace} and \emph{norm}, respectively. 
Any element $a \in \mathbb O$ satisfies the quadratic equality
\begin{equation}\label{eq-r}
a^2 - T(a)a + N(a)1 = 0
\end{equation}
(see, for example, \cite[Chapter III, \S 4]{schafer}).

For any two elements $a,b\in \mathbb O^-$, writing the equality (\ref{eq-r}) for
the element $a+b$, subtracting from it the same equalities for $a$ and for $b$,
and taking into account that $T(a) = T(b) = 0$, yields
\begin{equation}\label{eq-o}
ab + ba = N(a,b) 1 ,
\end{equation}
where $N(a,b) = N(a) + N(b) - N(a+b)$.

\subsection{Algebras of Hermitian and skew-Hermitian matrices over octonions}

Our main characters, the algebras of Hermitian and skew-Hermitian matrices over
octonions, are defined as $\sym^+(M_n(\mathbb O),J)$ and 
$\sym^-(M_n(\mathbb O),J)$ respectively, where $M_n(\mathbb O)$ is the algebra 
of $n \times n$ matrices with entries in $\mathbb O$. The involution on 
$M_n(\mathbb O)$ is defined as $J: (a_{ij}) \mapsto (\overline{a_{ji}})$, i.e.,
the matrix is transposed and each entry is conjugated, simultaneously.

The algebras $\sym^+(M_n(\mathbb O,J))$ are unital, the identity matrix being a
unit. These algebras for small $n$'s are Jordan algebras, well-known from the 
literature: for $n=1$ this is nothing but the ground field $K$, for $n=2$ this 
is the $10$-dimensional simple Jordan algebra of symmetric nondegenerate 
bilinear form (see, for example, \cite[Chapter IX, Exercise 4]{involutions} and 
\cite[\S 6]{ruhaak}), and for $n=3$ this is the famous $27$-dimensional 
exceptional simple Jordan algebra. For $n \ge 4$, this is no longer a Jordan 
algebra, but the case $n=4$ has some importance in modern physics, see 
\cite{toppan}; interestingly enough, this case was considered already in a 
little-known dissertation \cite{ruhaak}, under the direction of Hel Braun and 
Pascual Jordan.

The algebras $\sym^-(M_n(\mathbb O,J))$ are less prominent; it seems that the 
only case which has been appeared in the literature is $n=1$: the 
$7$-dimensional simple Malcev algebra $\mathbb O^-$.

Due to the isomorphism of algebras 
$M_n(\mathbb O) \simeq M_n(K) \otimes \mathbb O$, the algebra with involution
$(M_n(\mathbb O),J)$ can be represented as the tensor product of two 
algebras with involution: $(M_n(K), {}^\top)$, the associative algebra of 
$n \times n$ matrices over $K$ with involution defined by the matrix 
transposition, and $(\mathbb O, \topbar)$.

\section{Simplicity}\label{sec-simp}

\begin{proposition}
For any two vector spaces with involution $(V,j)$ and $(W,k)$, there are 
isomorphisms of vector spaces
\begin{alignat*}{1}
&\sym^{+}(V \otimes W, j \otimes k) \simeq
\sym^{+}(V,j) \otimes \sym^{+}(W,k) \dotplus
\sym^{-}(V,j) \otimes \sym^{-}(W,k)
\\
&\sym^{-}(V \otimes W, j \otimes k) \simeq
\sym^{+}(V,j) \otimes \sym^{-}(W,k) \dotplus
\sym^{-}(V,j) \otimes \sym^{+}(W,k) .
\end{alignat*}
\end{proposition}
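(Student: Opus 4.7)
The plan is to reduce the statement to a direct verification using the canonical decomposition $V = \sym^+(V,j) \dotplus \sym^-(V,j)$ and its analogue for $W$, already noted earlier in the excerpt. Distributing the tensor product gives the four-summand decomposition
\begin{equation*}
V \otimes W = \bigl(\sym^+(V,j) \otimes \sym^+(W,k)\bigr) \dotplus \bigl(\sym^+(V,j) \otimes \sym^-(W,k)\bigr) \dotplus \bigl(\sym^-(V,j) \otimes \sym^+(W,k)\bigr) \dotplus \bigl(\sym^-(V,j) \otimes \sym^-(W,k)\bigr) .
\end{equation*}

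Next I would examine the action of the involution $j \otimes k$ on pure tensors $v \otimes w$ with $v \in \sym^\varepsilon(V,j)$ and $w \in \sym^\eta(W,k)$, where $\varepsilon, \eta \in \{+,-\}$. By the definition of $j \otimes k$,
\begin{equation*}
(j \otimes k)(v \otimes w) = j(v) \otimes k(w) = (\varepsilon v) \otimes (\eta w) = \varepsilon \eta \, (v \otimes w),
\end{equation*}
so $j \otimes k$ acts as the scalar $\varepsilon \eta$ on the summand $\sym^\varepsilon(V,j) \otimes \sym^\eta(W,k)$. This immediately gives the inclusions of the right-hand sides into the left-hand sides in both formulas.

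For the reverse inclusions, take $x \in \sym^+(V \otimes W, j \otimes k)$ and decompose it according to the four-summand splitting as $x = x_{++} + x_{+-} + x_{-+} + x_{--}$. Applying $j \otimes k$ and using the scalar action just computed yields $x_{++} - x_{+-} - x_{-+} + x_{--}$, which by hypothesis equals $x$; since the sum is direct, $2x_{+-} = 2x_{-+} = 0$, and since $\operatorname{char} K \ne 2$ we conclude $x_{+-} = x_{-+} = 0$. The argument for $\sym^-(V \otimes W, j \otimes k)$ is entirely symmetric, forcing $x_{++} = x_{--} = 0$ instead.

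There is no real obstacle here: the only hypothesis tacitly invoked is $\operatorname{char} K \ne 2$, which is already part of the blanket assumptions, and the rest is formal bookkeeping with the eigenspace decomposition of $j \otimes k$.
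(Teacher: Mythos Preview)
Your argument is correct and in fact more direct than the paper's own. The paper does not first split $V$ and $W$ into their $\pm$-eigenspaces; instead it starts with an arbitrary element $\sum_i v_i \otimes w_i$ satisfying the symmetry condition, applies the projectors $(\id_V \pm j) \otimes \id_W$ to obtain two vanishing sums in $V \otimes W$, and then invokes an external result (\cite[Lemma~1.1]{low}) to rearrange the $v_i$'s and $w_i$'s so that each term already lies in one of the four pieces $\sym^\varepsilon(V,j) \otimes \sym^\eta(W,k)$. Your route bypasses this machinery entirely: once you use the eigenspace decompositions of $j$ and $k$ up front and distribute the tensor product, the involution $j \otimes k$ is visibly diagonal with eigenvalues $\varepsilon\eta$ on the four summands, and the claim is immediate. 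The paper's approach is element-wise and needs the auxiliary lemma on tensor products; yours is structural and self-contained, at the modest cost of invoking distributivity of $\otimes$ over direct sums. Both rely on $\operatorname{char} K \ne 2$ in the same essential way.
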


\begin{proof}
Let us prove the first isomorphism, the proof of the second one is completely
similar. By definition, an element $\sum_{i\in \mathbb I} v_i \otimes w_i$ of 
$V \otimes W$ belongs to $\sym^{+}(V \otimes W, j \otimes k)$, if and only if
$$
\sum_{i\in \mathbb I} \Big(J(v_i) \otimes K(w_i) - v_i \otimes w_i\Big) = 0 .
$$

Applying to this equality the linear maps $(\id_V + j) \otimes \id_W$ and 
$(\id_V - j) \otimes \id_W$, we get respectively:
$$
\sum_{i\in \mathbb I} (j(v_i) + v_i) \otimes (k(w_i) - w_i) = 0 
$$
and
$$
\sum_{i\in \mathbb I} (j(v_i) - v_i) \otimes (k(w_i) + w_i) = 0 .
$$

Applying \cite[Lemma 1.1]{low} to the last two equalities, we can replace 
$v_i$'s and $w_i$'s by their linear combinations in such a way that the index 
set is partitioned in the following way: 
$\mathbb I = \mathbb I_{11} \cup \mathbb I_{12} \cup \mathbb I_{21} \cup \mathbb I_{22}$,
where
\begin{alignat*}{5}
&v_i&\in& \sym^-(V,j), \>&v_i&\in& &\sym^+(V,j) \quad &\text{for }& i\in \mathbb I_{11}
\\
&v_i&\in& \sym^-(V,j), \>&w_i&\in& &\sym^-(W,k) \quad &\text{for }& i\in \mathbb I_{12}
\\
&v_i&\in& \sym^+(V,j), \>&w_i&\in& &\sym^+(W,k) \quad &\text{for }& i\in \mathbb I_{21}
\\
&w_i&\in& \sym^+(W,k), \>&w_i&\in& &\sym^-(W,k) \quad &\text{for }& i\in \mathbb I_{22}
.
\end{alignat*}

All elements with indices from $\mathbb I_{11}$ and $\mathbb I_{22}$ vanish, and
we are done.
\end{proof}

In the particular case $(V,j) = (M_n(K), {}^\top)$ and 
$(W,k) = (\mathbb O, \topbar)$, denoting $J = {}^\top \otimes \topbar$, and 
taking into account that $\mathbb O^+ = K1$, we get:
\begin{equation}\label{eq-dec}
\sym^+(M_n(\mathbb O),J) \simeq 
M_n^+(K) \otimes 1 \dotplus M_n^-(K) \otimes \mathbb O^- .
\end{equation}
(In the case where $n=3$, so
$\sym^+(M_3(\mathbb O,J))$ is the $27$-dimensional exceptional simple Jordan 
algebra, this decomposition was noted in \cite[\S 3.3]{draper}).

In particular,
$$
\dim \sym^+(M_n(\mathbb O,J)) = \frac{n(n+1)}{2} + 7 \cdot \frac{n(n-1)}{2} =
4n^2 - 3n .
$$

For any $m,s\in M_n^-(K)$, we have
$$
(m \otimes 1) \circ (s \otimes 1) = (m \circ s) \otimes 1 ,  
$$
what implies that $M_n^+(K) \otimes 1$ is a (Jordan) subalgebra of 
$\sym^+(M_n(\mathbb O,J))$. Moreover, for any $x,y \in M_n^-(K)$, and 
$a \in \mathbb O^-$, we have:
\begin{alignat*}{6}
&(m \otimes 1) \>&\circ&\> (x \otimes a) \>&=& &(m \circ x) &\otimes a
\\
&(x \otimes a) \>&\circ&\> (y \otimes a) \>&=& -N(a) &(x \circ y) &\otimes  1 .
\end{alignat*}

It follows that $M_n^+(K) \otimes 1 \dotplus M_n^-(K) \otimes a$ is a subalgebra
of $\sym^+(M_n(\mathbb O,J))$; let us denote this subalgebra by 
$\mathscr L^+(a)$. We have an isomorphism of Jordan algebras 
$\mathscr L^+(a) \otimes_K \overline K \simeq M_n(\overline K)$, where 
$\overline K$ is the quadratic extension of $K$; the isomorphism is provided by
sending $m \otimes 1$ to $m$ for $m \in M_n^+(\overline K)$, and $x \otimes a$ 
to $\sqrt{-N(a)}\,x$ for $x \in M_n^-(\overline K)$.

Further,
$$
(M_n^+(K) \otimes 1) \circ (M_n^-(K) \otimes \mathbb O^-) \subseteq 
M_n^-(K) \otimes \mathbb O^-  .
$$

On the other hand, the subspace $M_n^-(K) \otimes \mathbb O^-$ is not a 
subalgebra. The formula for multiplication in this subspace in terms of the 
decomposition (\ref{eq-dec}) is obtained using (\ref{eq-o}): for any 
$x,y \in M_n^-(K)$ and $a, b\in \mathbb O^-$, we have
\begin{multline}\label{eq-m}
(x \otimes a) \circ (y \otimes b) = xy \otimes ab + yx \otimes ba = 
\frac 12 (xy+yx) \otimes (ab+ba) + \frac 12 (xy - yx) \otimes (ab - ba) \\ =
N(a,b) \> (x \circ y) \otimes 1 + \frac 12 [x,y] \otimes [a,b] .
\end{multline}

Similarly, we have
\begin{equation}\label{eq-decomp-minus}
\sym^-(M_n(\mathbb O),J) \simeq M_n^-(K) \otimes 1 \dotplus 
M_n^+(K) \otimes \mathbb O^- ,
\end{equation}
and
$$
\dim \sym^-(M_n(\mathbb O),J) = \frac{n(n-1)}{2} + 7 \cdot \frac{n(n+1)}{2} =
4n^2 + 3n.
$$

For any $x,y\in M_n^-(K)$, $m,s\in M_n^+(K)$, and $a \in \mathbb O^-$, we have:
\begin{alignat*}{6}
&[x \otimes 1, y \otimes 1] \>&=&\>      &[&x,y] \>&\otimes&\> 1   \\
&[x \otimes 1, m \otimes a] \>&=&\>      &[&x,m] \>&\otimes&\> a  \\
&[m \otimes a, s \otimes a] \>&=&\> N(a) &[&s,m] \>&\otimes&\> 1 .
\end{alignat*}

It follows that both $M_n^-(K) \otimes 1$ and 
$\mathscr L^-(a) = M_n^-(K) \otimes 1 \dotplus M_n^+(K) \otimes a$ are Lie 
subalgebras of $\sym^-(M_n(\mathbb O),J)$, isomorphic to $\mathfrak{so}_n(K)$, 
and to a form of $\mathfrak{gl}_n(\overline K)$ respectively; the isomorphisms 
are provided by sending $x \otimes 1$ to $x$ for $x \in M_n^-(\overline K)$, and
$m \otimes a$ to $\sqrt{-N(a)}\,m$ for $m\in M_n^+(\overline K)$.

Moreover,
$$
[M_n^-(K) \otimes 1, M_n^+(K) \otimes \mathbb O^-] \subseteq 
M_n^+(K) \otimes \mathbb O^- .
$$

The subspace $M_n^+(K) \otimes \mathbb O^-$ is not a subalgebra: for any 
$m,s \in M_n^+(K)$, $a,b \in \mathbb O^-$, we have
\begin{equation}\label{eq-mult}
[m \otimes a, s \otimes b] = 
\frac 12 (ms - sm) \otimes (ab+ba) + \frac 12 (ms + sm) \otimes (ab - ba) =
\frac{N(a,b)}{2} [m,s] \otimes 1 + (m \circ s) \otimes [a,b] .
\end{equation}

\begin{theorem}\label{th-1}
The algebras $\sym^+(M_n(\mathbb O), J)$ and $\sym^-(M_n(\mathbb O), J)$ are 
simple for any $n \ge 1$.
\end{theorem}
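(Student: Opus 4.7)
My plan is to view each algebra as a module over a natural simple subalgebra inside it, use this to describe all submodules, and then apply the multiplication formulas together with the nondegeneracy of the norm form on $\mathbb O^-$ to eliminate every proper nonzero ideal.

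For $\sym^+(M_n(\mathbb O), J)$ with $n \ge 2$, I regard it as a Jordan module over the simple Jordan algebra $M_n^+(K) \otimes 1$. By (\ref{eq-dec}) it decomposes as the regular $M_n^+(K)$-Jordan module plus seven copies of $M_n^-(K)$ (each with action $m \cdot x = m \circ x$). Two ingredients are needed: first, $M_n^-(K)$ is irreducible as a Jordan $M_n^+(K)$-module, which I verify by a direct matrix computation (for example, the composition $L_{E_{ii}} L_{E_{jj}}$ with $i < j$ projects onto the line $K(E_{ij} - E_{ji})$, while $L_{E_{ik} + E_{ki}}$ carries $E_{ij} - E_{ji}$ to a nonzero multiple of $E_{jk} - E_{kj}$, so any nonzero element of $M_n^-(K)$ generates all of it under the Jordan action); second, $M_n^+(K)$ and $M_n^-(K)$ are non-isomorphic as modules since they have different dimensions $n(n+1)/2$ and $n(n-1)/2$. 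Hence any $M_n^+(K) \otimes 1$-submodule of $\sym^+(M_n(\mathbb O), J)$, and in particular any ideal $I$, has the form $\varepsilon (M_n^+(K) \otimes 1) \dotplus (M_n^-(K) \otimes W)$ for some $\varepsilon \in \{0,1\}$ and $W \subseteq \mathbb O^-$.

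If $\varepsilon = 1$ then $E \otimes 1 \in I$ and $I = \sym^+(M_n(\mathbb O), J)$. Otherwise $I = M_n^-(K) \otimes W$; taking $x \otimes a \in I$ (with $a \in W$) and a general $y \otimes b$ with $y \in M_n^-(K)$ and $b \in \mathbb O^-$, the product (\ref{eq-m}) lies in $I$ and has $M_n^+(K) \otimes 1$-component $N(a,b)(x \circ y) \otimes 1$. This must vanish. Lemma \ref{lemma-1} furnishes some $y$ with $x \circ y \ne 0$ whenever $x \ne 0$, so $N(a,b) = 0$ for all $b \in \mathbb O^-$; nondegeneracy of $N$ on $\mathbb O^-$ then yields $a = 0$, giving $W = 0$ and $I = 0$. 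The case $n = 1$ is trivial since $\sym^+(M_1(\mathbb O), J) = K$.

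The argument for $\sym^-(M_n(\mathbb O), J)$ is analogous, using the Lie subalgebra $M_n^-(K) \otimes 1 \simeq \mathfrak{so}_n(K)$ acting by commutator. By Lemma \ref{lemma-circ}, $M_n^+(K) = KE \dotplus V$ as $\mathfrak{so}_n(K)$-module with $V$ irreducible (for $n = 4$ the description as a tensor product of two irreducibles of the $\mathfrak{sl}_2 \oplus \mathfrak{sl}_2$ factors still yields an irreducible $\mathfrak{so}_4$-module); combined with irreducibility of the adjoint representation this produces an isotypic decomposition of $\sym^-(M_n(\mathbb O), J)$ from (\ref{eq-decomp-minus}). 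Using (\ref{eq-mult}), the nondegeneracy of $N$, and the identity $[\mathbb O^-, \mathbb O^-] = \mathbb O^-$, one runs the same elimination on the allowed types of submodules, successively propagating any nonzero piece of the ideal to the whole algebra. The case $n = 1$ is the simple Malcev algebra $\mathbb O^-$. The main obstacle I foresee is the case $n = 2$ of $\sym^-$, where $\mathfrak{so}_2(K)$ is abelian and the $\mathfrak{so}_2$-module theory lacks the rigidity of the semisimple case; this may require a direct calculation or a refinement using the larger subalgebras $\mathscr L^-(e_j)$, which are forms of $\mathfrak{gl}_n$ over a quadratic extension and whose ideal structure is still simple enough to exploit.
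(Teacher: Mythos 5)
Your route is genuinely different from the paper's: instead of the Jacobson density theorem for associative pairs, you restrict an ideal to a module over a distinguished subalgebra ($M_n^+(K)\otimes 1$ acting by $\circ$, respectively $M_n^-(K)\otimes 1\simeq\mathfrak{so}_n(K)$ acting by $\ad$) and classify submodules via the isotypic decomposition. For $\sym^+(M_n(\mathbb O),J)$ this works and is arguably more elementary: your computation with $L_{E_{ii}}L_{E_{jj}}$ in fact shows that $M_n^-(K)$ is \emph{absolutely} irreducible (the commutant of the multiplication algebra is $K$), which is exactly what you need -- and should state explicitly -- to conclude that every submodule of the isotypic component $M_n^-(K)\otimes\mathbb O^-$ is of the form $M_n^-(K)\otimes W$ rather than some twisted diagonal. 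The unit $E\otimes 1$ then settles the case $\varepsilon=1$ at once, and your $N(a,b)$-argument for $\varepsilon=0$ is equivalent to the paper's first reduction (testing against $y\otimes e_k$ in (\ref{eq-m}) and invoking Lemma \ref{lemma-1}). Where the paper needs density to propagate $m'\otimes 1$ and then $M_n^-(K)\otimes e_k$ into $I$, you get the same conclusion for free from the module structure; this half of your proof is complete.

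The skew-Hermitian half, however, has genuine gaps. First, the adjoint representation of $\mathfrak{so}_4(K)\simeq\mathfrak{sl}_2(K)\oplus\mathfrak{sl}_2(K)$ is \emph{not} irreducible -- it splits into the two $3$-dimensional ideals -- so for $n=4$ your isotypic analysis admits candidate submodules meeting $M_4^-(K)\otimes 1$ in only one $\mathfrak{sl}_2$-summand, and these must be separately propagated to the whole algebra (this is doable, since $SM_4(K)\otimes e_k$ restricted to either $\mathfrak{sl}_2$-factor is a sum of nontrivial irreducibles, but it is not in your text and your appeal to ``irreducibility of the adjoint representation'' is false here). Second, the trivial isotypic component $E\otimes\mathbb O^-$ contributes candidate ideals $E\otimes S$ for arbitrary subspaces $S\subseteq\mathbb O^-$; eliminating them requires the computation $[M_n^+(K)\otimes\mathbb O^-, E\otimes S]=M_n^+(K)\otimes[\mathbb O^-,S]$, which is precisely the step the paper performs after Lemma \ref{lemma-circ} -- ``one runs the same elimination'' does not substitute for it, since the relevant multiplication (\ref{eq-mult}) behaves differently on the trivial and nontrivial constituents. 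Third, and most seriously, you concede that $n=2$ is unresolved: $\mathfrak{so}_2(K)$ is abelian, so your method gives no control over submodules there, and the proposed fix via the subalgebras $\mathscr L^-(e_j)$ is not carried out. As written, simplicity of $\sym^-(M_2(\mathbb O),J)$ is simply missing. The paper's argument is uniform in $n$ precisely because it works with density for the pair $(M_n^+(K),M_n^-(K))$ rather than with $\mathfrak{so}_n(K)$-module theory.
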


Before we plunge into the proof, a few remarks are in order:
\begin{enumerate}[\upshape(i)]
\item
The cases of $\sym^+(M_n(\mathbb O), J)$ for $n=1,2,3$, and of 
$\sym^-(M_n(\mathbb O), J)$ for $n=1$ are well-known, due to the known structure
of the algebras in question in these cases (see \S \ref{sec-1}); however, our 
proofs, uniform for all $n$, appear to be new. The case of 
$\sym^+(M_4(\mathbb O), J)$ is stated without proof in \cite[Satz 8.1]{ruhaak}.

\item
In \cite{stewart} it is proved that ideals of the tensor product $A \otimes B$
of two algebras $A$ and $B$, where $A$ is central simple, and $B$ satisfies some
other conditions (like having a unit), are of the form $A \otimes I$, where $I$
is an ideal of $B$. In particular, the tensor product of two central simple 
algebras, for example, $M_n(K) \otimes \mathbb O$, is simple. Our method of 
proof of Theorem \ref{th-1}, based on application of the (variant of) Jacobson 
density theorem, resembles those in \cite{stewart}.

\item
A variant of the Jacobson density theorem we will need in our proof concerns 
so-called associative pairs, and is established in \cite[Theorem 1]{pairs}. In 
what follows, we will refer to it as the ``Jacobson density theorem for associative
pairs''. Due to the relations between the spaces of symmetric and skew-symmetric
matrices -- formulas (\ref{eq-inc}) in the particular case $(A,j) = (M_n(K),{}^\top)$ -- the
pair $(M_n^+(K), M_n^-(K))$ whose elements act on each other either via the 
commutator, or via the Jordan multiplication $\circ$, forms an associative 
primitive pair in the terminology of \cite{pairs}.

\item
Another related result about simplicity of nonassociative algebras is 
established in \cite[Satz 5.1]{ruhaak}: the matrix algebra over a composition 
algebra with respect to the Jordan product $\circ$, is simple; a particular case
is the algebra $(M_n(\mathbb O), \circ)$.

\end{enumerate}

\begin{proof}[Proof of Theorem \ref{th-1} in the case of 
$\sym^+(M_n(\mathbb O),J)$]
Let $I$ be an ideal of $\sym^+(M_n(\mathbb O), J)$. We argue in terms of the 
decomposition (\ref{eq-dec}). Assume first that 
$I \subseteq M_n^-(K) \otimes \mathbb O^-$. Consider an element 
\begin{equation}\label{eq-elem}
\sum_{i=1}^7 x_i \otimes e_i \in I ,
\end{equation}
where $x_i \in M_n^-(K)$, and $e_1,\dots,e_7$ are elements of the standard basis
of $\mathbb O$, as described in \S \ref{sec-1}. For any $y \in M_n^-(K)$, and
any $k=1,\dots,7$, we have 
$$
(y \otimes e_k) \circ (\sum_{i=1}^7 x_i \otimes e_i) = - (x_k \circ y) \otimes 1
+ \text{ terms lying in } M_n^-(K) \otimes \mathbb O^- .
$$
Hence $x_k \circ y = 0$ for any $y \in M_n^-(K)$, and by Lemma \ref{lemma-1},
$y = 0$. This shows that $I=0$, and we may assume 
$I \nsubseteq M_n^-(K) \otimes \mathbb O^-$.

Now take an element $m \otimes 1 + \sum_{i=1}^7 x_i \otimes e_i \in I$, where 
$m \in M_n^+(K)$, $x \ne 0$, and, as previously, $x_i \in M_n^-(K)$. By the 
Jacobson density theorem for associative pairs, for any $m^\prime \in M_n^+(K)$
there is a linear map $R: M_n(K) \to M_n(K)$, formed by a sum of products of the
form $R_{s_1} \dots R_{s_\ell}$, where each $s_i$ belongs to $M_n^+(K)$, and 
$R_s$ is the Jordan multiplication on the element $s$, such that 
$R(m) = m^\prime$ and $R(x_i) = 0$ for any $i=1,\dots,7$. We form the 
corresponding map $\widetilde R$ from the multiplication algebra of 
$\sym^+(M_n(\mathbb O),J)$ by replacing each $R_{s_i}$ by $R_{s_i \otimes 1}$. 
Then $\widetilde R(m \otimes 1) = m^\prime \otimes 1$ and 
$\widetilde R(x_i \otimes e_i) = 0$. Consequently, $m^\prime \otimes 1 \in I$,
and $I$ contains $M_n^+(K) \otimes 1$.

We can write $I$ as the direct sum of vector spaces 
$I = M_n^+(K) \otimes 1 \dotplus S$ for some subspace 
$S \subseteq M_n^-(K) \otimes \mathbb O^-$. As we can obviously form nonzero 
Jordan products between elements of $M_n^+(K) \otimes 1$ and of 
$M_n^-(K) \otimes \mathbb O^-$, we have that $S \ne 0$. Consider again a
nonzero element of $I$ of the form (\ref{eq-elem}). Applying again the Jacobson
density theorem for associative pairs, for any $x \in M_n^-(K)$, and for any 
$k=1,\dots,7$, we get a linear map $R: M_n(K) \to M_n(K)$ generated by Jordan 
multiplications by elements of $M_n^+(K)$, such that $R(x_k) = x$ and 
$R(x_i) = 0$ for $i\ne k$. Deriving from this the map $\widetilde R$ in the 
multiplication algebra of $\sym^+(M_n(\mathbb O),J)$ as above, we get that
$M_n^-(K) \otimes e_k \subseteq I$ for each $k=1,\dots,7$, and hence $I$ 
coincides with the whole algebra $\sym^+(M_n(\mathbb O),J)$.
\end{proof}

\begin{proof}[Proof of Theorem \ref{th-1} in the case of 
$\sym^-(M_n(\mathbb O),J)$]
The proof goes along the same route as in the previous case. 

Let $I$ be an ideal of $\sym^-(M_n(\mathbb O),J)$. Assume first 
$I \subseteq M_n^+(K) \otimes \mathbb O^-$. Consider an element 
\begin{equation}\label{eq-elem1}
\sum_{i=1}^7 m_i \otimes e_i \in I ,
\end{equation}
where $m_i \in M_n^+(K)$. For any $s \in M_n^+(K)$, and any $k=1,\dots,7$, we 
have
$$
[s \otimes e_k, \sum_{i=1}^7 m_i \otimes e_i] = [m_k,s] \otimes 1 + 
\text{terms lying in } M_n^+(K) \otimes \mathbb O^- .
$$
Hence $[m_k,s] = 0$ for any $s\in M_n^+(K)$, and by Lemma \ref{lemma-circ}, 
$m_k = \lambda_k E$ for some $\lambda_k \in K$. Therefore, any element of $I$
is of the form $\sum_{i=1}^7 \lambda_i E \otimes e_k \in E \otimes \mathbb O^-$,
and $I = E \otimes S$ for some subspace $S \subseteq \mathbb O^-$. But then 
$$
[M_n^+(K) \otimes \mathbb O^-, E \otimes S] = M_n^+(K) \otimes [\mathbb O^-,S]
\subseteq E \otimes S ,
$$
what can happen only if all the involved spaces are zero, i.e. $S=0$ and $I=0$.
Therefore, we may assume $I \nsubseteq M_n^+(K) \otimes \mathbb O^-$.

Consider an element $x \otimes 1 + \sum_{i=1}^7 m_i \otimes e_i \in I$, where 
$x \in M_n^-(K)$, $x\ne 0$, and $m_i \in M_n^+(K)$. By the Jacobson density 
theorem for associative pairs, for any $x^\prime \in M_n^-(K)$ there is a linear
map $R: M_n(K) \to M_n(K)$, formed by a sum of products of the form 
$\ad y_1 \dots \ad y_\ell$, where each $y_i$ belongs to $M_n^-(K)$, and $\ad y$
denotes the commutator with $y$, such that $R(x) = x^\prime$, and $R(m_i) = 0$ 
for each $i=1,\dots,7$. Replacing in $R$ each $\ad y_i$ by 
$\ad (y_i \otimes 1)$, we get the map $\widetilde R$ in the multiplication 
algebra of $\sym^-(M_n(\mathbb O),J)$ such that 
$\widetilde R(x \otimes 1) = x^\prime \otimes 1$ and 
$\widetilde R(m_i \otimes e_i) = 0$, and thus 
$$
\widetilde R(x \otimes 1 + \sum_{i=1}^7 m_i \otimes e_i) = x^\prime \otimes 1 .
$$

Consequently, $I$ contains $M_n^-(K) \otimes 1$, and we can write 
$I = M_n^-(K) \otimes 1 \dotplus S$ for some subspace 
$S \subseteq M_n^+(K) \otimes \mathbb O^-$. As $M_n^-(K) \otimes 1$ alone is, 
obviously, not an ideal in $\sym^+(M_n(\mathbb O),J)$, we have $S \ne 0$. 
Consider again a nonzero element of $I$ of the form (\ref{eq-elem1}). By the 
Jacobson density theorem for associative pairs, for any $m \in M_n^+(K)$, and any 
$k=1,\dots,7$, there is a linear map $R: M_n(K) \to M_n(K)$ generated by 
commutators with elements of $M_n^-(K)$, such that $R(m_k) = m$, and 
$R(m_i) = 0$ for $i\ne k$. Deriving from this the map $\widetilde R$ in the
multiplication algebra of $\sym^-(M_n(\mathbb O),J)$ as above, we get that
$\widetilde R(\sum_{i=1}^7 m_i \otimes e_i) = m \otimes e_k$. This shows that
$I$ coincides with the whole algebra $\sym^-(M_n(\mathbb O),J)$.
\end{proof}

\section{$\delta$-derivations}\label{sec-der}

In \cite{petyt}, derivations of the algebras $\sym^+(M_n(\mathbb O),J)$ and
$\sym^-(M_n(\mathbb O),J)$ were computed. Here we extend this result by 
computing $\delta$-derivations of these algebras. Recall that a 
\emph{$\delta$-derivation} of an algebra $A$ is a linear map $D: A \to A$ such
that
\begin{equation}\label{eq-delta}
D(xy) = \delta D(x)y + \delta xD(y)
\end{equation}
for any $x,y \in A$ and some fixed $\delta \in K$. This notion generalizes 
simultaneously the notions of derivation and of centroid (any element of the
centroid is, obviously, a $\frac 12$-derivation).

The set of $\delta$-derivations of an algebra $A$, denoted by $\Der_\delta(A)$,
forms a vector space. Moreover, as noted, for example, in \cite[\S 1]{filippov},
$$
[\Der_\delta(A), \Der_{\delta^\prime}] \subseteq \Der_{\delta\delta^\prime}(A) ,
$$
so the vector space $\Delta(A)$ linearly spanned by all $\delta$-derivations, 
for all possible values of $\delta$, forms a Lie algebra, an extension of the 
Lie algebra $\Der(A)$ of (the usual) derivations of $A$.

\begin{theorem}\label{th-delta}
Let $D$ be a nonzero $\delta$-derivations of the algebra 
$\sym^+(M_n(\mathbb O),J)$ or $\sym^-(M_n(\mathbb O),J)$. Then either 
$\delta = 1$ (i.e., $D$ is a derivation), or $\delta = \frac 12$ and $D$ is a 
multiple of the identity map.
\end{theorem}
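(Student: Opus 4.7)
The algebra $\sym^+(M_n(\mathbb O),J)$ is unital, with unit $u := E \otimes 1$, while $\sym^-(M_n(\mathbb O),J)$ has no unit, so the two cases call for somewhat different arguments, and I would treat them separately. In each case it suffices to show that $D = 0$ whenever $\delta \notin \{0,\tfrac12,1\}$ and that every $\tfrac12$-derivation is a scalar multiple of the identity; the case $\delta = 0$ forces $D = 0$ in both algebras (directly in the unital case, and via the fact that the product in the simple non-unital algebra spans everything), while $\delta = 1$ is already covered by \cite{petyt}.

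\emph{Unital case.} Applying \eqref{eq-delta} to the identities $u = u \circ u$ and $x = x \circ u$ yields $(1 - 2\delta) D(u) = 0$ and $(1 - \delta) D(x) = \delta\, x \circ D(u)$. Together these force $D = 0$ whenever $\delta \notin \{\tfrac12, 1\}$ and, when $\delta = \tfrac12$, reduce $D$ to the right-Jordan-multiplication operator $R_c$ with $c := D(u)$. Plugging $D = R_c$ back into \eqref{eq-delta} translates the $\tfrac12$-derivation property into the Jordan-associator identity
$$
(x,y,c) + (y,x,c) = 0 \quad \text{for all } x,y \in \sym^+(M_n(\mathbb O),J) ,
$$
where $(\cdot,\cdot,\cdot)$ denotes the Jordan associator $(a,b,d) = (a\circ b)\circ d - a\circ(b\circ d)$. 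I would then write $c = m \otimes 1 + \sum_{i=1}^7 y_i \otimes e_i$ via \eqref{eq-dec}, substitute pairs $(x,y)$ drawn from each of the two summands, expand using \eqref{eq-m}, and project the resulting identity along $1$ and along each $e_k$. Lemmas \ref{lemma-1} and \ref{lemma-circ}, applied to the separated components, should force every $y_i$ to vanish and $m \in KE$, so that $c \in Ku$ and $D$ is a scalar multiple of the identity.

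\emph{Non-unital case.} For $\sym^-(M_n(\mathbb O),J)$ I would exploit the semisimple Lie subalgebra $M_n^-(K) \otimes 1 \simeq \mathfrak{so}_n(K)$. Writing
$$
D(x \otimes 1) = D_0(x) \otimes 1 + \sum_{i=1}^7 D_i(x) \otimes e_i , \qquad D(m \otimes e_k) = E_k(m) \otimes 1 + \sum_{i=1}^7 E_{k,i}(m) \otimes e_i ,
$$
and inserting these expansions into \eqref{eq-delta} for each of the three bracket identities displayed just before \eqref{eq-mult}, the projection onto $M_n^-(K) \otimes 1$ of the first identity shows that $D_0$ is a $\delta$-derivation of the (semi)simple Lie algebra $\mathfrak{so}_n(K)$. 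Filippov's classification \cite{filippov} then forces $D_0 = 0$ when $\delta \notin \{\tfrac12, 1\}$, and $D_0 = \lambda\,\id$ for some $\lambda \in K$ when $\delta = \tfrac12$. The remaining projections, combined with \eqref{eq-mult} and Lemma \ref{lemma-circ}, should propagate this information componentwise to pin down all of $E_k$, $E_{k,i}$, and $D_i$.

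The real obstacle is the $\delta = \tfrac12$ analysis, especially in the non-unital case, where $D$ cannot be described as multiplication by a single element. The bookkeeping across the seven octonion directions, together with the need to ensure that the partial multiplication $*$ does not permit any ``exotic'' $\tfrac12$-derivations beyond $\lambda\,\id$, is where the computations will be most intricate; the other cases should collapse quickly via Lemmas \ref{lemma-1} and \ref{lemma-circ} combined with Filippov's theorem.
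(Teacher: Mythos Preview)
Your treatment of the unital case is essentially the paper's own argument: reduce via the unit to $\delta=\tfrac12$ and $D=R_c$, then analyse the element $c$ through the decomposition \eqref{eq-dec}. One small correction: to conclude $m\in KE$ from the condition on the $M_n^+(K)\otimes 1$-component, Lemmas~\ref{lemma-1} and~\ref{lemma-circ} are not quite the right tools; what you actually obtain is that $R_m$ is a $\tfrac12$-derivation of the simple Jordan algebra $(M_n^+(K),\circ)$, and for that one invokes \cite[Theorem~2.5]{kayg-first}. The $y_i$ part does go through Lemma~\ref{lemma-1} as you indicate.

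In the skew-Hermitian case your route diverges from the paper's, and there is a genuine gap. You restrict to the Lie subalgebra $M_n^-(K)\otimes 1\simeq\mathfrak{so}_n(K)$ and appeal to Filippov for $D_0$. But Filippov's classification of $\delta$-derivations needs a simple Lie algebra of rank $\ge 2$: for $n=1$ this subalgebra is zero, for $n=2$ it is abelian, and for $n=3,4$ it is $\mathfrak{sl}_2$ or $\mathfrak{sl}_2\oplus\mathfrak{sl}_2$, which carry nontrivial $(-1)$-derivations. So the claim ``$D_0=0$ for $\delta\notin\{\tfrac12,1\}$'' fails for all $n\le 4$, and these cases require separate arguments you have not supplied. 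Moreover, even for large $n$, the ``propagation'' step is heavier than you suggest: the off-diagonal pieces $D_i:M_n^-(K)\to M_n^+(K)$ are $\delta$-derivations of $\mathfrak{so}_n$ with values in the \emph{module} $M_n^+(K)$, not in the adjoint module, and Filippov's theorem as stated does not cover that.

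The paper sidesteps both issues by working instead with the larger Lie subalgebras $\mathscr L^-(e_k)=M_n^-(K)\otimes 1\dotplus M_n^+(K)\otimes e_k\simeq\mathfrak{gl}_n(K)$. Restricting and projecting $D$ to $\mathscr L^-(e_k)$ gives an honest $\delta$-derivation of $\mathfrak{gl}_n$ (adjoint values), which is handled by a short lemma on $\delta$-derivations of $\mathfrak{gl}_n$ for $n>2$; this simultaneously pins down $D_0$, $D_k$, $E_k$ and $E_{k,k}$. A further auxiliary lemma (maps $d:M_n^+\to M_n^+$ satisfying $d([x,m])=\delta[x,d(m)]$ have image in $KE$) then finishes the generic case, and $n=1,2$ are treated separately (via the Malcev algebra $\mathbb O^-$ and via the subalgebra $E\otimes\mathbb O^-$, respectively). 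Your $\mathfrak{so}_n$-based plan could in principle be pushed through, but it would require the module-valued $\delta$-derivation statement the paper explicitly flags as ``considerable efforts'' and postpones; switching to the $\mathfrak{gl}_n$ subalgebras is the missing idea that makes the argument tractable.
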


The case of $\sym^+(M_n(\mathbb O),J)$ is easier, as the algebra contains a 
unit, and $\delta$-derivations of algebras with unit are tackled by the simple

\begin{lemma}\label{lemma-a}
Let $D$ be a $\delta$-derivation of a commutative algebra $A$ with unit. Then 
either $\delta=1$ (i.e., $D$ is a derivation), or $\delta = \frac 12$ and 
$D = R_a$ for some $a \in A$ such that 
\begin{equation}\label{eq-a}
2(xy)a - (xa)y - (ya)x = 0
\end{equation}
for any pair of elements $x,y \in A$.
\end{lemma}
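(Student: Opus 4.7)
The plan is to feed the unit into the $\delta$-derivation identity and extract enough linear relations to pin $D$ down. First I would set $x = y = 1$ in (\ref{eq-delta}), which gives $D(1) = 2\delta D(1)$ and hence $(1 - 2\delta)D(1) = 0$. Next I would set $y = 1$ (and use commutativity, so $x \cdot 1 = x$) to obtain
$$
D(x) = \delta D(x) + \delta \, x D(1) ,
$$
that is, $(1 - \delta)D(x) = \delta \, x D(1)$ for every $x \in A$.

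From these two relations I would separate cases. If $D(1) = 0$, the second relation reduces to $(1-\delta)D(x) = 0$ for all $x$, so either $\delta = 1$ (the derivation case) or $D \equiv 0$, which is excluded. If instead $D(1) \ne 0$, the first relation forces $\delta = \frac{1}{2}$; plugging $\delta = \frac12$ into the second relation yields $\tfrac12 D(x) = \tfrac12 x D(1)$, so $D = R_a$ with $a := D(1)$.

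Finally, to derive (\ref{eq-a}), I would substitute $D = R_a$ back into (\ref{eq-delta}) with $\delta = \frac12$: on the left we get $(xy)a$, and on the right $\tfrac12 (xa)y + \tfrac12 x(ya)$. Rearranging and using commutativity (which equates $x(ya)$ with $(ya)x$) produces (\ref{eq-a}) directly. I do not anticipate any real obstacle here; the argument is a sequence of elementary substitutions, and the only subtlety is ruling out the possibility that $D(1) = 0$ could coexist with $\delta = \tfrac12$ for a nonzero $D$, which is handled by the case split above.
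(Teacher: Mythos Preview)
Your argument is correct and is essentially the same as the paper's: both proofs substitute the unit into (\ref{eq-delta}) to force the dichotomy, then read off $D=R_a$ and the condition (\ref{eq-a}) in the $\delta=\tfrac12$ case. The only minor remark is that when you say $D\equiv 0$ ``is excluded'', the lemma as stated does not actually assume $D\ne 0$; but this is harmless, since the zero map is trivially a derivation (and also equals $R_0$), so it falls under either branch of the stated conclusion.
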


\begin{proof}
This is, essentially, \cite[Theorem 2.1]{kayg-first} with a bit more (trivial)
details. Repeatedly substituting the unit $1$ in the equality (\ref{eq-delta}) 
gives that either $\delta = 1$ and $D(1) = 0$, or $\delta = \frac 12$ and 
$D(x) = xD(1)$ for any $x\in A$. In the latter case, denoting $D(1) = a$, the 
condition (\ref{eq-delta}) is equivalent to (\ref{eq-a}).
\end{proof}

\begin{proof}[Proof of Theorem \ref{th-delta} in the case of 
$\sym^+(M_n(\mathbb O),J)$]
Due to Lemma \ref{lemma-a} it amounts to description of algebra elements 
satisfying the condition (\ref{eq-a}). Let 
$a = m \otimes 1 + \sum_{i=1}^7 x_i \otimes e_i$ be such an element, where 
$m \in M_n^+(K)$, $x_i \in M_n^-(K)$. Writing the condition (\ref{eq-a}) for the
pair of elements $s \otimes 1$, $t \otimes 1$ where $s,t \in M_n^+(K)$, and 
collecting terms lying in $M_n^+(K) \otimes 1$, we get
$$
2(s \circ t) \circ m - (s \circ m) \circ t - (t \circ m) \circ s = 0
$$
for any $s,t \in M_n^+(K)$. The latter condition means that $R_m$ is a 
$\frac 12$-derivation of the Jordan algebra $M_n^+(K)$, and by 
\cite[Theorem 2.5]{kayg-first}, $m = \lambda E$ for some $\lambda \in K$. As the
set of elements satisfying the condition (\ref{eq-a}) forms a vector space (as,
generally, the set of $\frac 12$-derivations does), by subtracting from $a$ the
element $\lambda E \otimes 1$, we get an element still satisfying the condition
(\ref{eq-a}), so we may assume $\lambda = 0$.

Now writing the condition (\ref{eq-a}) for $a = \sum_{i=1}^7 x_i \otimes e_i$, 
and the pair $x \otimes e_k$, $y \otimes e_\ell$, where $x,y \in M_n^-(K)$ and 
$k,\ell = 1,\dots,7$, $k \ne \ell$, and again collecting terms lying in 
$M_n^+(K) \otimes 1$, we get $[x,y] \circ x_{k*\ell} = 0$. Since 
$[M_n^-(K),M_n^-(K)] = M_n^-(K)$, and the values of $k*\ell$ run over all 
$1,\dots,7$, we get that $M_n^-(K) \circ x_i = 0$ for any $i=1,\dots,7$. By 
Lemma \ref{lemma-1}, $x_i = 0$, what shows that any element 
$a \in \sym^+(M_n(\mathbb O,J))$ satisfying (\ref{eq-a}), is a multiple of the 
unit.
\end{proof}

Before turning to the proof of the $\sym^-(M_n(\mathbb O),J)$ case, we need a 
couple of auxiliary lemmas.

\begin{lemma}\label{lemma-gl}
Let $n>2$. 
\begin{enumerate}[\upshape(i)]
\item
If $\delta \ne 1,\frac 12$, then the vector space 
$\Der_{\delta}(\mathfrak{gl}_n(K))$ is $1$-dimensional, and each 
$\delta$-derivation is a multiple of the map $\xi$ vanishing on 
$\mathfrak{sl}_n(K)$, and sending $E$ to itself.

\item
The vector space $\Der_{\frac 12}(\mathfrak{gl}_n(K))$ is $2$-dimensional, with
a basis consisting of the two maps: the map $\xi$ from part {\rm (i)}, and the 
map coinciding with the identity map on $\mathfrak{sl}_n(K)$, and vanishing on 
$E$.
\end{enumerate}
\end{lemma}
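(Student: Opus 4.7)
The plan is to reduce the computation to the known description of $\delta$-derivations of the simple Lie algebra $\mathfrak{sl}_n(K)$, exploiting the decomposition $\mathfrak{gl}_n(K) = \mathfrak{sl}_n(K) \dotplus KE$ whose second summand is the center. I will invoke the fact that for $n > 2$ one has $\Der_{\delta}(\mathfrak{sl}_n(K)) = 0$ whenever $\delta \ne 0, 1, \frac{1}{2}$, while $\Der_{1/2}(\mathfrak{sl}_n(K)) = K\cdot\id$; this is part of Filippov's classification from \cite{filippov}.

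Given a $\delta$-derivation $D$ of $\mathfrak{gl}_n(K)$, I write $D = D_1 + D_2$ with $D_1$ taking values in $\mathfrak{sl}_n(K)$ and $D_2$ in $KE$. For $x,y \in \mathfrak{sl}_n(K)$ the defining identity splits according to the two summands; since the image of $D_2$ is central, the $\mathfrak{sl}_n(K)$-component yields that $D_1|_{\mathfrak{sl}_n(K)}$ is a $\delta$-derivation of $\mathfrak{sl}_n(K)$, while the $KE$-component shows $D_2$ vanishes on $[\mathfrak{sl}_n(K),\mathfrak{sl}_n(K)] = \mathfrak{sl}_n(K)$. Applying $D$ to the relation $[E,y]=0$ and using $\delta \ne 0$ forces $D(E)$ to commute with all of $\mathfrak{gl}_n(K)$, hence $D(E) = \mu E$ for some $\mu \in K$.

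Combining these observations, for $\delta \ne 0, 1, \frac 12$ one gets $D_1|_{\mathfrak{sl}_n(K)} = 0$, so $D$ vanishes on $\mathfrak{sl}_n(K)$ and is a scalar multiple of $\xi$; for $\delta = \frac 12$ one additionally obtains $D_1|_{\mathfrak{sl}_n(K)} = \lambda \cdot \id$, yielding $D = \lambda\eta + \mu\xi$. It then remains to verify directly that $\xi$ is a $\delta$-derivation for every $\delta$, which is immediate since every commutator in $\mathfrak{gl}_n(K)$ lies in $\mathfrak{sl}_n(K)$ while every value of $\xi$ is central, and that $\eta$ is a $\frac 12$-derivation, which follows because both sides of the defining identity equal $[x,y]$ on $\mathfrak{sl}_n(K) \times \mathfrak{sl}_n(K)$ and vanish as soon as one argument is proportional to $E$.

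The main external ingredient is Filippov's classification of $\delta$-derivations of simple Lie algebras; the rest is a routine projection argument on the two-step decomposition of $\mathfrak{gl}_n(K)$. The only subtlety to note is that the above analysis uses $\delta \ne 0$ in the step determining $D(E)$, so the case $\delta = 0$ is implicitly understood to be excluded from the statement of part (i).
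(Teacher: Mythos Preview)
Your argument is correct and follows essentially the same route as the paper: both exploit the split central extension $\mathfrak{gl}_n(K) = \mathfrak{sl}_n(K) \oplus KE$ and reduce to the known classification of $\delta$-derivations of $\mathfrak{sl}_n(K)$ (the paper cites \cite{leger-luks} and \cite{filippov}). The paper's proof is a one-sentence sketch, whereas you spell out the projection argument and the verification that $\xi$ and $\eta$ are genuine $\delta$-derivations; your observation that $\delta = 0$ must be tacitly excluded is also well taken.
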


\begin{proof}
This follows immediately from the fact that $\mathfrak{gl}_n(K)$ is the split 
central extension of $\mathfrak{sl}_n(K)$: 
$\mathfrak{gl}_n(K) = \mathfrak{sl}_n(K) \oplus KE$, and the fact, established
in numerous places, that each nonzero $\delta$-derivation of 
$\mathfrak{sl}_n(K)$, $n>2$, is either a usual derivation ($\delta = 1$), or 
element of the centroid ($\delta = \frac 12$) (see, for example,
\cite[Corollary 4.16]{leger-luks} or \cite{filippov}).
\end{proof}

\begin{lemma}\label{lemma-xdm-comm}
Let $D: M_n^+(K) \to M_n^+(K)$ be a linear map such that
\begin{equation}\label{eq-dxm}
D([x,m]) = \delta [x,D(m)]
\end{equation}
for any $x \in M_n^-(K)$, $m \in M_n^+(K)$, and some fixed $\delta \in K$, 
$\delta \ne 0,1$. Then the image of $D$ lies in the one-dimensional linear space
spanned by $E$.
\end{lemma}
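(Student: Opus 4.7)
My plan is to exploit the $\mathfrak{so}_n(K)$-module structure on $M_n^+(K)$ supplied by the commutator action. Writing $M_n^+(K) = KE \dotplus W$, where $W$ denotes the subspace of traceless symmetric matrices (both summands being $\mathfrak{so}_n(K)$-submodules, since $\Tr([x,m])=0$), I would first establish that $D(E) \in KE$ and $D(W) \subseteq W$. For $D(E)$, setting $m = E$ in (\ref{eq-dxm}) and using $[x, E] = 0$ gives $[x, D(E)] = 0$ for all $x \in M_n^-(K)$, so Lemma \ref{lemma-circ} yields $D(E) \in KE$. For $D(W) \subseteq W$, I would observe that any commutator $[x, m]$ with $x \in M_n^-(K)$, $m \in M_n^+(K)$ is traceless, hence $[M_n^-(K), M_n^+(K)] \subseteq W$, and by the representation-theoretic description of $W$ recalled in the proof of Lemma \ref{lemma-circ} ($W$ is irreducible over $\mathfrak{so}_n(K)$ for $n \ne 4$, and an irreducible tensor product for $n = 4$), in fact $[M_n^-(K), M_n^+(K)] = W$; the right-hand side $\delta[x, D(m)]$ of (\ref{eq-dxm}) lies in $W$, so $D(W) \subseteq W$.

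The crux of the argument is a double-bracket computation. Applying (\ref{eq-dxm}) twice, for $x, y \in M_n^-(K)$ and $m \in M_n^+(K)$, I would write
\[
D\bigl([x, [y, m]]\bigr) = \delta [x, D([y, m])] = \delta^2 [x, [y, D(m)]].
\]
Antisymmetrising in $x, y$ and using the Jacobi identity yields $D([[x,y], m]) = \delta^2 [[x,y], D(m)]$. On the other hand, since $[x,y] \in M_n^-(K)$, a single direct application of (\ref{eq-dxm}) gives $D([[x,y], m]) = \delta [[x,y], D(m)]$. Comparing the two and using $\delta \ne 0, 1$, I would conclude that $[[x,y], D(m)] = 0$ for all $x, y \in M_n^-(K)$ and $m \in M_n^+(K)$.

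To finish, I would invoke the perfectness relation $[M_n^-(K), M_n^-(K)] = M_n^-(K)$, valid for $n \ge 3$ since $\mathfrak{so}_n(K)$ is (semi)simple, to obtain $[y, D(m)] = 0$ for every $y \in M_n^-(K)$. A final application of Lemma \ref{lemma-circ} then forces $D(m) \in KE$. The main obstacle I anticipate is the dependence on $n$ in this last step: for $n = 2$, $\mathfrak{so}_2(K)$ is abelian and the Jacobi-identity argument degenerates, so this small case would need separate treatment, while the case $n = 1$ is immediate since $M_1^+(K) = KE$.
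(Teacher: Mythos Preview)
Your core argument---apply (\ref{eq-dxm}) twice to get $D([x,[y,m]])=\delta^2[x,[y,D(m)]]$, antisymmetrise and use Jacobi to obtain $D([[x,y],m])=\delta^2[[x,y],D(m)]$, compare with the direct application $D([[x,y],m])=\delta[[x,y],D(m)]$, and then invoke $[M_n^-(K),M_n^-(K)]=M_n^-(K)$ together with Lemma~\ref{lemma-circ}---is exactly the paper's proof, merely written in a slightly different order. Your preliminary observations that $D(E)\in KE$ and $D(W)\subseteq W$ are correct but never used, so you can drop them; and your caution about small $n$ is well placed: the paper applies this lemma only in the case $n>2$, handling $n=1,2$ by other means.
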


\begin{proof}
Replacing in the equality (\ref{eq-dxm}) $x$ by $[x,y]$, where 
$x,y\in M_n^-(K)$, and using the Jacobi identity, we get:
$$
D([x,[y,m]]) - D([y,[x,m]]) = \delta [[x,y],D(m)] .
$$
Using the fact that $[x,m], [y,m] \in M_n^+(K)$, applying again (\ref{eq-dxm}) 
to each term at the left-hand side twice, and using the Jacobi identity, we get
$[[x,y],D(m)] = 0$. Since $[M_n^-(K),M_n^-(K)] = M_n^-(K)$, the latter equality
is equivalent to $[M_n^-(K),D(m)] = 0$. By Lemma \ref{lemma-circ}, $D(m)$ is a 
multiple of $E$ for any $m \in M_n^+(K)$.
\end{proof}

When considering restrictions of $\delta$-derivations to subalgebras, we arrive
naturally at the necessity to consider a more general notion of 
$\delta$-derivations with values in not necessary the algebra itself, but in an 
algebra module. Generally, this require to consider bimodules, but as we will 
need this generalization only in the case of anticommutative (in fact, Lie) 
algebras, we confine ourselves here with the following definition. Let $A$ be 
an anticommutative algebra, and $M$ a left $A$-module, with the module action 
denoted by $\bullet$. A $\delta$-derivation of $A$ with values in $M$ is a 
linear map $D: A \to M$ such that
\begin{equation*}
D(xy) = - \delta y \bullet D(x) + \delta x \bullet D(y)
\end{equation*}
for any $x,y \in A$.

\begin{proof}[Proof of Theorem \ref{th-delta} in the case of 
$\sym^-(M_n(\mathbb O),J)$]
If $n=1$, the algebra in question is the $7$-di\-men\-si\-o\-nal simple Malcev 
algebra $\mathbb O^-$, and the result is covered by 
\cite[Lemma 3]{filippov-ass}.

Let $n > 2$ and $\delta \ne 1$. As the space of $\delta$-derivations does not 
change under field extensions, we may extend the base field $K$ as we wish, in 
particular, assume that $K$ is quadratically closed.

We may write
\begin{alignat*}{3}
&D(x \otimes 1)   \>&=&\> d(x) \otimes 1   &+& \sum_{i=1}^7 d_i(x) \otimes e_i 
\\
&D(m \otimes e_k) \>&=&\> f_k(m) \otimes 1 &+& \sum_{i=1}^7 f_{ki}(m) \otimes e_i 
\end{alignat*}
for any $x \in M_n^-(K)$, $m\in M_n^+(K)$, $k=1,\dots,7$, and some linear maps
$d: M_n^-(K) \to M_n^-(K)$, $d_i: M_n^-(K) \to M_n^+(K)$, 
$f_k: M_n^+(K) \to M_n^-(K)$, and $f_{ki}: M_n^+(K) \to M_n^+(K)$.

For a fixed $k=1,\dots,7$, consider the Lie subalgebra 
$$
\mathscr L^-(e_k) = M_n^-(K) \otimes 1 \dotplus M_n^+(K) \otimes e_k
$$
of $\sym^-(M_n(\mathbb O),J)$, isomorphic, as noted in \S \ref{sec-simp}, to 
$\mathfrak{gl}_n(K)$. According to decomposition (\ref{eq-decomp-minus}), 
$\sym^-(M_n(\mathbb O),J)$ is decomposed, as an $\mathscr L^-(e_k)$-module, into
the direct sum of the adjoint module $\mathscr L^-(e_k)$, and the module 
$M_n^+(K) \otimes B_k$ (note, however, that the latter is not a Lie module). 
This implies that the restriction of $D$ to $\mathscr L^-(e_k)$, being composed 
with the canonical projection $\sym^-(M_n(\mathbb O),J) \to \mathscr L^-(e_k)$, 
i.e., the map
\begin{alignat*}{3}
&x \otimes 1   \>&\mapsto&\> d(x)   \otimes 1 \>&+&\> d_k(x) \otimes e_k   \\
&m \otimes e_k \>&\mapsto&\> f_k(m) \otimes 1 \>&+&\> f_{kk}(m) \otimes e_k , 
\end{alignat*}
is a $\delta$-derivation of $\mathscr L^-(e_k)$ (with values in the adjoint 
module). 

Denote by $SM_n(K)$ the space of matrices from $M_n^+(K)$ with trace zero (so 
$M_n^+(K) = SM_n(K) \oplus KE$). By Lemma \ref{lemma-gl}, either 
$\delta \ne \frac 12$, and each such map is of the form
\begin{alignat*}{3}
&x \otimes 1   \>&\mapsto&\> 0  \\
&m \otimes e_k \>&\mapsto&\> 0, \quad m\in SM_n(K) \\
&E \otimes e_k \>&\mapsto&\> \mu_k E \otimes e_k
\end{alignat*}
for some $\mu_k \in K$; or $\delta = \frac 12$, and each such map is of the form
\begin{alignat*}{3}
&x \otimes 1   \>&\mapsto&\> \lambda_k x \>&\otimes&\> 1   \\
&m \otimes e_k \>&\mapsto&\> \lambda_k m \>&\otimes&\> e_k, \quad m\in SM_n(K) 
\\
&E \otimes e_k \>&\mapsto&\> \mu_k E     \>&\otimes&\> e_k
\end{alignat*}
for some $\lambda_k, \mu_k \in K$. Taking into account that one of these 
alternatives holds uniformly for all values of $k$, we arrive at two cases:

\emph{Case 1}. $\delta \ne 1, \frac 12$ and $D(M_n^-(K) \otimes 1) = 0$.

\emph{Case 2}. $\delta = \frac 12$, and $D(x \otimes 1) = \lambda x \otimes 1$ 
for any $x \in M_n^-(K)$ and some fixed $\lambda \in K$.

Moreover, in both cases 
$D(M_n^+(K) \otimes \mathbb O^-) \subseteq M_n^+(K) \otimes \mathbb O^-$. We 
will handle these two cases together, keeping in mind that $\lambda = 0$ if
$\delta \ne \frac 12$.

Consider now the restriction of $D$ to $M_n^+(K) \otimes \mathbb O^-$. Since 
\begin{equation*}
\Hom(M_n^+(K) \otimes \mathbb O^-, M_n^+(K) \otimes \mathbb O^-)
\simeq
\Hom(M_n^+(K),M_n^+(K)) \otimes \Hom(\mathbb O^-,\mathbb O^-) ,
\end{equation*}
we may write
$$
D(m \otimes a) = \sum_{i \in \mathbb I} d_i(m) \otimes \alpha_i(a)
$$
for any $m \in M_n^+(K)$, $a \in \mathbb O^-$, and some linear maps
$d_i: M_n^+(K) \to M_n^+(K)$, $\alpha_i: \mathbb O^- \to \mathbb O^-$. Writing 
the condition of $\delta$-derivation (\ref{eq-delta}) for pair $x \otimes 1$, 
$m \otimes a$, where $x \in M_n^-(K)$, $m \in M_n^+(K)$, 
$a \in \mathbb O^-$, we get
\begin{equation}\label{eq-22}
\sum_{i\in \mathbb I}
\Big(d_i([x,m]) - \delta [x,d_i(m)]\Big) \otimes \alpha_i(a)
= \delta\lambda [x,m] \otimes a .
\end{equation}

In Case 1 the right-hand side of (\ref{eq-22}) vanishes and hence we may assume
$d_i([x,m]) = \delta [x,d_i(m)]$ for any $x\in M_n^-(K)$, $m\in M_n^+(K)$, and 
any $i\in \mathbb I$. By Lemma \ref{lemma-xdm-comm}, each $d_i(m)$ is a multiple
of $E$, and hence 
$D(M_n^+(K) \otimes \mathbb O^-) \subseteq E \otimes \mathbb O^-$. But then
writing (\ref{eq-delta}) for pair $m\otimes a$, $s\otimes b$, where 
$m,s\in M_n^+(K)$, $a,b\in \mathbb O^-$, and taking into account 
(\ref{eq-mult}), yields $D((m \circ s) \otimes [a,b]) = 0$. Since 
$(M_n(K),\circ)$ and $(\mathbb O^-, \liebrack)$ are perfect (in fact, simple)
algebras, the latter equality implies vanishing of $D$ on the whole 
$M_n^+(K) \otimes \mathbb O^-$, and thus on the whole 
$\sym^-(M_n(\mathbb O),J)$, a contradiction.

Hence we are in Case 2, and $\delta = \frac 12$. Setting in this case 
$d_\star = -\lambda \id_{M_n^+(K)}$, and $\alpha_\star = \id_{\mathbb O^-}$, the
equality (\ref{eq-22}) can be rewritten as
$$
\sum_{i\in \mathbb I \cup \{\star\}}
\Big(d_i([x,m]) - \frac 12 [x,d_i(m)]\Big) \otimes \alpha_i(a) = 0 .
$$

As in the previous case, this means that there are new linear maps 
$\widetilde d_i$, $\widetilde \alpha_i$ which are linear combinations of $d_i$ 
and $\alpha_i$, respectively, and such that 
\begin{equation}\label{eq-tilde}
\sum_{i \in \mathbb I \cup \{\star\}} 
\widetilde d_i \otimes \widetilde \alpha_i = 
\sum_{i \in \mathbb I \cup \{\star\}} d_i \otimes \alpha_i ,
\end{equation}
and $\widetilde d_i([x,m]) = \frac 12 [x, \widetilde d_i(m)]$. 
Lemma~\ref{lemma-xdm-comm} tells us, as previously, that each 
$\widetilde d_i(m)$ is a multiple of $E$, and hence the image of the map at the
left-hand side of (\ref{eq-tilde}) lies in $E \otimes \mathbb O^-$. Since the 
right-hand side of (\ref{eq-tilde}) is equal to 
$D + d_\star \otimes \alpha_\star$, we have
$$
D(m \otimes a) = \lambda m \otimes a + E \otimes \beta(m,a)
$$
for any $m \in M_n^+(K)$, $a \in \mathbb O^-$, and some bilinear map 
$\beta: M_n^+(K) \times \mathbb O^- \to \mathbb O^-$. Replacing $D$ by the
$\frac 12$-derivation $D - \lambda \id$, we arrive at the situation as in the
previous case: a $\delta$-derivation (with $\delta = \frac 12$) vanishing on 
$M_n^-(K) \otimes 1$, and taking values in $E \otimes \mathbb O^-$ on 
$M_n^+(K) \otimes \mathbb O^-$. Hence $D - \lambda \id$ vanishes on the whole 
$\sym^-(M_n(\mathbb O),J)$, and $D = \lambda \id$, as claimed.

Finally, consider the case $n=2$. In this case Lemma \ref{lemma-gl} is not 
true: in addition to the cases described there, there is the $5$-dimensional 
space of $(-1)$-derivations of $\mathfrak{sl}_2(K)$, and thus the corresponding 
$6$-dimensional space of $(-1)$-derivations of $\mathfrak{gl}_2(K)$ (see 
\cite[Example 1.5]{hopkins} or \cite[Example in \S 3]{filippov-5}). In view of
this, to proceed like in the proof of the case $n>2$, considering 
$\delta$-derivations of the Lie subalgebras $\mathscr L^-(e_k)$, would be too 
cumbersome, and we are taking a slightly alternative route.

Denote by $H = \left(\begin{matrix} 0 & 1 \\ -1 & 0 \end{matrix}\right)$ the 
basic element of the $1$-dimensional space $M_2^-(K)$. Consider the subalgebra 
$E \otimes \mathbb O^-$ of $\sym^+(M_2(\mathbb O),J)$, isomorphic to the 
$7$-dimensional simple Malcev algebra $\mathbb O^-$. As an 
$E \otimes \mathbb O^-$-module, $\sym^+(M_2(\mathbb O),J)$ decomposes as the 
direct sum of the trivial $1$-dimensional module $KH \otimes 1$, and the module
$M_2^+(K) \otimes \mathbb O^-$, which is isomorphic to the direct sum of $3$ 
copies of the adjoint module ($\mathbb O^-$ acting on itself). Thus $D$, being 
restricted to $E \otimes \mathbb O^-$, is equal to the sum of a 
$\delta$-derivation with values in the trivial module, which is obviously zero,
and $3$ $\delta$-derivations of $\mathbb O^-$. By the result mentioned at the 
beginning of this proof, the latters are zero in the case 
$\delta \ne 1,\frac 12$, and are multiples of the identity map in the case 
$\delta = \frac 12$. Consequently, $D(E \otimes a) = m_0 \otimes a$ for any 
$a \in \mathbb O^-$, and some fixed $m_0 \in M_2^+(K)$.

Now write
$$
D(H \otimes 1) = \lambda H \otimes 1 + \sum_{i=1}^7 m_i \otimes e_i  
$$
for some $\lambda \in K$, and $m_i \in M_2^+(K)$. Writing the condition of
$\delta$-derivation (\ref{eq-delta}) for pair $H \otimes 1$, $E \otimes e_k$,
$k=1,\dots,7$, we get
$$
\pm 2\sum_{1\le i \le 7, i\ne k} m_i \otimes e_{i*k} + [H,m_0] \otimes e_k = 0 .
$$
It follows that $m_i = 0$ for any $i=1,\dots,7$, and
$D(H \otimes 1) = \lambda H\otimes 1$.

Now let
$$
D(m \otimes a) = \beta(m,a) H \otimes 1 + 
\text{terms lying in } M_2^+(K) \otimes \mathbb O^-
$$
for any $m\in M_2^+(K)$, $a\in \mathbb O^-$, and some bilinear map 
$\beta: M_2^+(K) \otimes \mathbb O^- \to K$. Writing the condition of 
$\delta$-derivation for pair $H \otimes 1$, $m \otimes a$, and collecting terms
which are multiples of $H \otimes 1$, we get $\beta(m,a) H \otimes 1 = 0$. Thus
$D(M_2^+(K) \otimes \mathbb O^-) \subseteq M_2^+(K) \otimes \mathbb O^-$, and we
may proceed as in the generic case $n>2$ above.
\end{proof}

Note that it is also possible to pursue the case $\delta = 1$ along the same 
lines, what would give an alternative proof of the results of \cite{petyt}, as
well as of the classical result that derivation algebra of the $27$-dimensional
exceptional simple Jordan algebra is isomorphic to the simple Lie algebra of 
type $F_4$.

There is a vast literature devoted to $\delta$-derivations of algebras and 
related notions (see \cite{hopkins}, \cite{filippov-5}--\cite{filippov-ass}, 
\cite{kayg-first}, \cite{leger-luks} for a small but representative sample). Our
strategy to prove Theorem \ref{th-delta} was to identify certain Lie subalgebras
of the algebra $\sym^-(M_n(\mathbb O),J)$, and consider $\delta$-derivations of
those subalgebras with values in the whole $\sym^-(M_n(\mathbb O),J)$. 
Developing further the methods of the above cited papers, it is possible to 
prove that $\delta$-derivations of semisimple Lie algebras of classical type 
with coefficients in finite-dimensional modules are either (inner) derivations,
or multiples of the identity map on irreducible constituents of the module 
isomorphic to the adjoint module of the algebra, or, in the case of the direct 
summands in the algebra isomorphic to $\mathfrak{sl}_2(K)$, $(-1)$-derivations 
with values in the irreducible constituents isomorphic to the adjoint 
$\mathfrak{sl}_2(K)$-modules. This general fact would allow to simplify further the proof of 
Theorem \ref{th-delta}, but establishing it will require considerable (though pretty much straightforward) 
efforts, and will lead us far away from the topic of this paper. We hope to return to this elsewhere.

As by \cite{petyt}, both $\Der(\sym^+(M_n(\mathbb O),J))$ for $n\ge 4$ and
$\Der(\sym^-(M_n(\mathbb O),J))$ for any $n$ are isomorphic to the Lie algebra 
$G_2 \oplus \mathfrak{so}_n(K)$, then by Theorem \ref{th-delta}, both
$\Delta(\sym^+(M_n(\mathbb O),J))$ and $\Delta(\sym^-(M_n(\mathbb O),J))$ are 
isomorphic to the one-dimensional trivial central extension of 
$G_2 \oplus \mathfrak{so}_n(K)$.

Finally, note an important

\begin{corollary}
The algebras $\sym^+(M_n(\mathbb O),J)$ and $\sym^-(M_n(\mathbb O),J)$ are 
central simple.
\end{corollary}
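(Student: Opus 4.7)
The plan is to reduce central simplicity to simplicity plus a description of the centroid, and then read off both ingredients from the two theorems already established. Recall that an algebra $A$ is \emph{central simple} if and only if it is simple and its centroid $\Gamma(A)$ — the set of linear maps $\phi: A \to A$ satisfying $\phi(xy) = \phi(x)y = x\phi(y)$ for all $x,y \in A$ — coincides with $K\cdot \id$. Simplicity of both $\sym^+(M_n(\mathbb O),J)$ and $\sym^-(M_n(\mathbb O),J)$ has been established in Theorem \ref{th-1}, so only the centroid computation remains.

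The key observation, already noted in the paragraph preceding Theorem \ref{th-delta}, is that any $\phi \in \Gamma(A)$ is automatically a $\frac{1}{2}$-derivation of $A$: from $\phi(xy) = \phi(x)y$ and $\phi(xy) = x\phi(y)$, averaging gives
$$
\phi(xy) = \frac{1}{2}\phi(x)y + \frac{1}{2}x\phi(y),
$$
which is equation (\ref{eq-delta}) with $\delta = \frac{1}{2}$. This applies uniformly to both algebras under consideration, regardless of whether we view them with the Jordan product or with the commutator. Therefore, applying Theorem \ref{th-delta} to a nonzero centroid element $\phi$ forces $\delta = \frac{1}{2}$ (the alternative $\delta = 1$ would only be available if the $\frac{1}{2}$-derivation happened also to be a derivation, forcing it to be zero), and $\phi$ must be a scalar multiple of the identity map. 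The reverse inclusion $K \cdot \id \subseteq \Gamma(A)$ is trivial.

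Combining the simplicity from Theorem \ref{th-1} with the equality $\Gamma(A) = K\cdot \id$ just derived, central simplicity of both $\sym^+(M_n(\mathbb O),J)$ and $\sym^-(M_n(\mathbb O),J)$ follows. There is no substantive obstacle in this argument — the corollary is a direct formal consequence of the two preceding theorems — and indeed this is the standard route from a $\delta$-derivation classification to central simplicity.
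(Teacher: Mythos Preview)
Your argument is correct and follows exactly the route the paper takes: simplicity comes from Theorem~\ref{th-1}, and the centroid is identified with $K\cdot\id$ via Theorem~\ref{th-delta} together with the observation (already made just before that theorem) that every element of the centroid is a $\frac12$-derivation. The paper's own proof is a two-line compression of what you have written out in detail; the parenthetical about the $\delta=1$ alternative is unnecessary, since once $\delta=\frac12$ is fixed the theorem directly forces $D$ to be a scalar multiple of the identity.
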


\begin{proof}
By Theorem \ref{th-1}, these algebras are simple, and by Theorem \ref{th-delta}
their centroid coincides with the ground field.
\end{proof}

\section{Symmetric associative forms}\label{sec-form}

Let $A$ be an algebra. A bilinear symmetric form $\varphi: A \times A \to K$ is
called \emph{associative}, if
\begin{equation}\label{eq-f}
\varphi(xy,z) = \varphi(x,yz)
\end{equation}
for any $x,y,z \in A$. (In the context of Lie algebras, associative forms are
usually called \emph{invariant}, because in that case the condition (\ref{eq-f})
is equivalent to invariance of the form $\varphi$ with respect to the standard 
action of the underlying Lie algebra on the space of symmetric bilinear forms).

For a matrix $X = (a_{ij})$ from $M_n(\mathbb O)$, by $\overline X$ we will 
understand the matrix $(\overline{a_{ij}})$, obtained by element-wise 
application of conjugation in $\mathbb O$.

\begin{theorem}\label{th-form}
Any bilinear symmetric associative form on $\sym^+(M_n(\mathbb O),J)$ or on 
$\sym^-(M_n(\mathbb O),J)$ is a scalar multiple of the form 
\begin{equation}\label{eq-form}
(X, Y) \mapsto \Tr(XY + \overline X \, \overline Y) .
\end{equation}
\end{theorem}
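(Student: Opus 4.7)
The strategy splits into two independent parts: exhibit one specific non-zero symmetric associative form of the stated shape, and then invoke central simplicity (the Corollary above) to force uniqueness up to a scalar.

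For the first part, the stated formula is best understood as $\Phi(X,Y) = 2\, T\!\left(\sum_{i,k} x_{ik} y_{ki}\right)$, where $T:\mathbb{O}\to K$ is the octonion trace. Landing in $K$ is built into $T$, and symmetry follows from the octonion identity $T(ab)=T(ba)$ applied term by term. The main non-trivial point is associativity with respect to $\circ$ on $\sym^+(M_n(\mathbb{O}),J)$, and with respect to $\liebrack$ on $\sym^-(M_n(\mathbb{O}),J)$. Both reduce to cyclic symmetry of the trilinear expression $(X,Y,Z) \mapsto T\bigl(\sum_{i,j,k} x_{ik} y_{kj} z_{ji}\bigr)$, which in turn follows from the specifically octonionic fact that every associator $(a,b,c) = (ab)c - a(bc)$ lies in $\mathbb{O}^-$ and is therefore killed by $T$. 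This simultaneously takes care of the non-associativity of $M_n(\mathbb{O})$ itself and of the non-commutativity of $\mathbb{O}$. Non-vanishing is then easy: $\Phi(E,E) \ne 0$ for $\sym^+(M_n(\mathbb{O}),J)$, while a quick computation on $X = (E_{12}-E_{21})\otimes 1 \in \sym^-(M_n(\mathbb{O}),J)$ yields $\Phi(X,X) \ne 0$.

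For the second part, let $\varphi$ be any symmetric associative form on $A = \sym^\pm(M_n(\mathbb{O}),J)$. Its radical $\{x\in A:\varphi(x,A)=0\}$ is a two-sided ideal, so by Theorem~\ref{th-1} it is either $0$ or $A$: $\varphi$ is identically zero or non-degenerate. Applied to $\Phi$ this shows $\Phi$ is non-degenerate. Now given an arbitrary symmetric associative $\psi$, non-degeneracy of $\Phi$ uniquely defines a linear $T:A\to A$ via $\psi(x,y)=\Phi(Tx,y)$. Associativity of $\psi$ combined with that of $\Phi$ gives $T(xy)=(Tx)y$; the identity $\psi(a,bc)=\psi(b,ca)$ (enjoyed by every symmetric associative form, via first symmetry then associativity) combined with the same for $\Phi$ gives $(Ta)b = a(Tb)$. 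Together these say that $T$ lies in the centroid of $A$; but by the Corollary, $A$ is central simple, so the centroid equals $K\cdot \id$, whence $T = \lambda\cdot \id$ for some $\lambda \in K$ and $\psi = \lambda\Phi$.

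The main obstacle is the associativity verification in the first part: the octonion-valued matrix trace on $M_n(\mathbb{O})$ is not cyclic on triples; only its composition with $T$ is, and only because of the alternativity of $\mathbb{O}$. Once associativity and non-vanishing of $\Phi$ are in hand, the uniqueness argument is the standard reduction for central simple algebras and is entirely routine.
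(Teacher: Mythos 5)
Your proposal is correct, and its skeleton (exhibit one nonzero symmetric associative form, then use central simplicity to get uniqueness up to scalar) is the same as the paper's; the difference lies in how the key existence step is carried out. The paper verifies associativity of the form by transporting it to the decompositions $M_n^{\pm}(K)\otimes 1 \dotplus M_n^{\mp}(K)\otimes \mathbb O^-$ and reading it off the component multiplication formulas (\ref{eq-m}), (\ref{eq-mult}) (alternatively citing R\"uhaak), whereas you work directly in matrix coordinates and reduce everything to the single octonionic identity $T\bigl((ab)c\bigr)=T\bigl(a(bc)\bigr)$, i.e.\ tracelessness of associators, which makes the trilinear form $T\bigl(\Tr(XYZ)\bigr)$ cyclically symmetric and hence associative for both $\circ$ and $\liebrack$ at once; this is more self-contained and arguably more conceptual. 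Your uniqueness argument (radical is an ideal, hence the form is $0$ or nondegenerate; the transporter $T$ between two nondegenerate forms lies in the centroid, which is $K\cdot\id$ by the Corollary) is exactly the ``standard linear-algebraic argument'' the paper invokes without writing out, and it is correct as you state it. Two small remarks. First, your reading of the form as (a multiple of) $T\bigl(\Tr(XY)\bigr)=\Tr(XY)+\Tr(\overline Y\,\overline X)$ is the right one: the literal expression $\Tr(XY+\overline X\,\overline Y)$ in (\ref{eq-form}) is not $K$-valued (already for $X=e_1$, $Y=e_2$ in $\sym^-(M_1(\mathbb O),J)$ it gives $2e_1e_2$), while your version agrees with the paper's own component formulas such as $(x\otimes a, y\otimes b)\mapsto (ab+ba)\Tr(xy)$; so you have in effect corrected a transposition in the displayed formula. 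Second, your non-vanishing witness $\Phi(E,E)=2\,T(n\cdot 1)$ is a multiple of $n$ and can vanish when the characteristic divides $n$; replace it by $\Phi(E_{11}\otimes 1, E_{11}\otimes 1)\ne 0$ (and for $\sym^-$ with $n=1$ use $\Phi(e_1,e_1)=-2T(1)\ne 0$, since $E_{12}-E_{21}$ is unavailable there). Neither point affects the validity of the argument.
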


The form (\ref{eq-form}) is reminiscent of the Killing form on simple Lie 
algebras of classical type (and \emph{is} the Killing form when restricted from
the algebra $\sym^-(M_n(\mathbb O,J))$ to its Lie subalgebra 
$\mathfrak{so}_n(K)$, see below).

\begin{proof}
According to Corollary in \S \ref{sec-der}, both algebras are central simple. 
The standard linear-algebraic arguments show that any bilinear symmetric 
associative form on a simple algebra is nondegenerate, and that any two 
nondegenerate symmetric associative forms on a central algebra are proportional 
to each other. Thus, the vector space of bilinear symmetric associative forms on
a central simple algebra is either $0$- or $1$-dimensional.

Now it remains to observe that in both cases this space is $1$-dimensional by 
verifying that the form (\ref{eq-form}) is indeed associative. The most 
convenient way to do this is, perhaps, to rewrite the form in terms of decompositions (\ref{eq-dec}) or (\ref{eq-decomp-minus}). On the algebra 
$\sym^+(M_n(\mathbb O),J)$ we obtain
\begin{alignat*}{3}
&(m \otimes 1, s \otimes 1) &\>\mapsto\>& 2\Tr(ms)                \\
&(m \otimes 1, x \otimes a) &\>\mapsto\>& 0                       \\
&(x \otimes a, y \otimes b) &\>\mapsto\>& (ab+ba) \Tr(xy) ,
\end{alignat*}
and on $\sym^-(M_n(\mathbb O),J)$, 
\begin{alignat*}{3}
&(x \otimes 1, y \otimes 1) &\>\mapsto\>& 2\Tr(xy)                \\
&(x \otimes 1, m \otimes a) &\>\mapsto\>& 0                       \\
&(m \otimes a, s \otimes b) &\>\mapsto\>& (ab+ba) \Tr(ms) .
\end{alignat*}
Here, as usual, $x,y\in M_n^-(K)$, $m,s\in M_n^+(K)$, and $a,b \in \mathbb O^-$.
(For the algebra $\sym^+(M_n(\mathbb O),J)$, the associativity follows also from
\cite[Satz 5.2]{ruhaak}, where it is proved that the form (\ref{eq-form}) is a 
symmetric associative form on a larger algebra $(M_n(\mathbb O), \circ)$).
\end{proof}

Note that it is possible to get an alternative, direct proof of 
Theorem~\ref{th-form} without appealing to results of \S \ref{sec-der}, in the 
linear-algebraic spirit of the proofs of Proposition in \S \ref{sec-simp}, or of
Theorem \ref{th-delta}.

\section{Further questions}\label{sec-q}

1) 
To compute automorphism group of algebras $\sym^+(M_n(\mathbb O),J)$ and
$\sym^-(M_n(\mathbb O),J)$. Are they isomorphic to $G_2 \times SO(n)$?

\smallskip

2)
For $n>3$, the algebras $\sym^+(M_n(\mathbb O),J)$ are no longer Jordan. How 
``far'' they are from Jordan algebras? Which identities these algebras do 
satisfy? A starting point could be investigation of (non-Jordan)
representations of the Jordan subalgebras which are forms of the full matrix 
Jordan algebra $M_n(K)$, mentioned in \S \ref{sec-simp}, in the whole 
$\sym^+(M_n(\mathbb O),J)$.

\smallskip

3)
What one can say about subalgebras of the algebras in question? Say, what are 
the maximal subalgebras? Maximal Jordan subalgebras of 
$\sym^+(M_n(\mathbb O),J)$? 

\smallskip

4) 
We have considered the case of split octonions only, but, as considerations of 
the Jordan algebra $\sym^+(M_3(\mathbb O),J)$ suggest, the case of arbitrary 
octonion algebras poses further challenges. In particular, one may wish to 
characterize the algebras $\sym^+(M_n(\mathbb O,J))$ for split $\mathbb O$ in 
some internal way (like cubic Jordan algebras in the case $n=3$), and then 
characterize their forms as $\sym^+(M_n(\mathbb O,J))$ for arbitrary 
$\mathbb O$.

\smallskip

5)
Investigate the case of characteristic $3$. Though this case is, perhaps, of 
little interest for physics, in characteristic $3$ the $7$-dimensional algebra
$\mathbb O^-$ is not merely a Malcev algebra, but isomorphic to the Lie algebra
$\mathfrak{psl}_3(K)$ (see, for example, \cite[Theorem 4.26]{elduque-kochetov}).
This suggests that the algebras $\sym^+(M_3(\mathbb O),J)$ and 
$\sym^-(M_3(\mathbb O),J)$ in this characteristic may satisfy a different set of
identities than in the generic case, perhaps, more tractable and more closer to the classical 
identities (Lie, Jordan, etc.).

\section*{Acknowledgements}

Thanks are due to Francesco Toppan who pointed us to \cite{toppan} and 
\cite{ruhaak}, and explained the importance of algebras considered here.
GAP \cite{gap} was utilized to check some of the computations performed in this
paper.

\end{document}